\newcommand{\R} [1] { {\bf{R}}^{#1} }
\newcommand{\T} [1] { {\bf{T}}^{#1} }
\newcommand{\Sphere} [1] { {\bf{S}}^{#1} }
\newcommand{\SO} [1] { {\rm{SO}}_{#1}\R{} }
\newcommand{\Disc} [1] {{\bf D}^{#1} }
\newcommand{\Z} [1] { {\bf{Z}}^{#1} }
\newcommand{\C} [1] { {\bf{C}}^{#1} }
\newcommand \im { {\rm im}\, }
\def\D{{\mathcal D}}
\def\n{{\mathfrak n}}
\numberwithin{equation}{section}
\newtheorem{thm}{Theorem}[section]  % Theorem environment
\newtheorem{lem}[thm]{Lemma}        % Lemma environment
\theoremstyle{definition}
\begin{document}

\title{Geometry and real-analytic integrability \\
%%{\tt \,  m4.tex}
}

\author[L. T. Butler]{Leo T. Butler}
\address{School of Mathematics\\
The University of Edinburgh,
6214 James Clerk Maxwell Building,
Edinburgh, UK, EH9 3JZ}
\email{l.butler@ed.ac.uk}
\thanks{The author thanks Alexei Bolsinov and two anonymous referees
for their comments.}

\begin{abstract}
This note constructs a compact, real-analytic, riemannian $4$-manifold
$(\Sigma,{\bf g})$ with the properties that: (1) its geodesic flow is
completely integrable with smooth but not real-analytic integrals; (2)
$\Sigma$ is diffeomorphic to $\T{2} \times \Sphere{2}$; and (3) the
limit set of the geodesic flow on the universal cover is dense. This shows
there are obstructions to real-analytic integrability beyond
the topology of the configuration space.
\end{abstract}

\keywords{geodesic flows, integrable systems,
momentum map, real-analytic integrability}
\subjclass[2000]{  37J30, 37K10, 53C22, 53D25}
\date{\today}

\maketitle

\section{Introduction}

Ta\u{\i}manov proves that if $(\Sigma,{\bf g})$ is a compact
real-analytic manifold whose geodesic flow is integrable with
real-analytic first integrals, then there is an invariant torus
$\T{}$ in the unit-tangent sphere bundle $S\Sigma$ such that
$\pi_1(\T{}) \to \pi_1(\Sigma)$ is almost surjective~\cite{Taimanov:1988a}. The example
in~\cite{Butler:2000a} shows that this is false in the smooth
category.  In that example $(\Sigma,{\bf g})$ is a compact
real-analytic riemannian $3$-manifold with a nilpotent $\pi_1$ that is not almost abelian.

To state the present note's results: Let $E$ be the total space of a
$\C{1}$-vector bundle over $\T{2}$ with even Euler number. The inclusion
of $\C{}$ into $\Sphere{2} = \C{} \cup
\{\infty\}$ induces a compactification of $E$ into an $\Sphere{2}$-bundle
$\Sigma$ over $\T{2}$
with a natural action of $\SO{2}$ on
its fibres. Define a
metric ${\bf g}$ on $\Sigma$ by $\SO{2}$-equivariantly identifying
 $T\Sigma$ as the orthogonal direct
sum of the sub-bundle $V\simeq T\Sphere{2}$ of vertical fibres and a
horizontal sub-bundle $H \simeq T\T{2}$ and equip each
fibre of $V$ (resp. $H$) with the
metric of the round sphere of unit radius (resp. standard flat metric).
There are natural identifications of $H$ and $V$ that allow ${\bf g}$ to be
uniquely defined (see sections 3.3--3.4).

\begin{thm} \label{thm:a}
The compact, real-analytic, riemannian $4$-manifold
$(\Sigma,{\bf g})$ has the following properties
\begin{enumerate}
\item its geodesic flow, $\varphi : \R{} \times S\Sigma \to S\Sigma$,
is completely (resp. non-commutatively) integrable with smooth but not
real-analytic integrals;
\item $\Sigma$ is bundle isomorphic to $\T{2} \times \Sphere{2}$;
\item the limit set of its geodesic flow on the universal cover is dense; and
\item its geodesic flow has zero topological entropy.
\end{enumerate}
\end{thm}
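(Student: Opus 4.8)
\emph{Plan.} I will read off the integrals from the two symmetries the construction builds in. Choose fibred coordinates: $(x,y)$ on $\T{2}$, a colatitude $\theta$ and rotation angle $\psi$ on the fibre $\Sphere{2}$, and a principal connection $1$-form $d\psi-a_1\,dx-a_2\,dy$ whose curvature $(\partial_x a_2-\partial_y a_1)\,dx\wedge dy$ integrates to the Euler number $n$. The cometric is
\begin{equation*}
H=\tfrac12\Big[(p_x+a_1\Jo)^2+(p_y+a_2\Jo)^2\Big]+\tfrac12\Big[p_\theta^2+\tfrac{\Jo^2}{\sin^2\theta}\Big],\qquad \Jo:=p_\psi .
\end{equation*}
Because $\psi$ is cyclic, $\Jo$ is a first integral, and because $a_1,a_2$ do not involve $\theta$ the latitudinal motion sees only the effective potential $\Jo^2/2\sin^2\theta$; hence the fibre energy $F:=p_\theta^2+\Jo^2/\sin^2\theta$ is also conserved and Poisson-commutes with $H$ and $\Jo$. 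These three are global and real-analytic on $S\Sigma$, but they are one short of complete integrability. Property (2) is independent and purely topological: oriented $\Sphere{2}$-bundles over $\T{2}$ are classified by $H^2(\T{2};\Z{}/2)=\Z{}/2$, and the one-point compactification of a line bundle of even Euler number lies in the trivial class, so $\Sigma$ is bundle-isomorphic to $\T{2}\times\Sphere{2}$.

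The missing integral comes from the base translations, but only after passing to the cover $\widetilde\Sigma=\R{2}\times\Sphere{2}$. There the vertical field $V=\partial_\psi$ together with the corrected lifts $\hat X_1=\partial_x+\pi n y\,\partial_\psi$ and $\hat X_2=\partial_y-\pi n x\,\partial_\psi$ are Killing, spanning a Heisenberg algebra $[\hat X_1,\hat X_2]=-2\pi n\,V$; their Noether momenta $P_1,P_2$ satisfy $\{P_1,P_2\}=-2\pi n\,\Jo$ and commute with $H,\Jo,F$. The coefficients $\pi n y,\pi n x$ are not periodic, so $\hat X_1,\hat X_2$ do not descend to $\Sigma$: the deck group $\Z{2}$ acts by $P_i\mapsto P_i+\pi n\,\Jo$. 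To manufacture a global integral on $\Sigma$ I would take a function of $P_1$ that is periodic of period $\pi n\Jo$, for instance $\cos(2P_1/(n\Jo))$, and tame its essential singularity at $\Jo=0$ with a cutoff $\chi(\Jo)$ that is flat at the origin; the result $\tilde Q=\chi(\Jo)\cos(2P_1/(n\Jo))$ is a global $C^\infty$ integral commuting with $H,\Jo,F$, giving smooth complete integrability, while the pair built from $P_1,P_2$ supplies the Heisenberg data for non-commutative integrability. The main technical obstacle is verifying that $\tilde Q$ extends smoothly across $\{\Jo=0\}$ and across the fibre poles $\sin\theta=0$; this is where the flatness of $\chi$ is forced.

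Properties (3) and (4) follow by integrating the flow on $\widetilde\Sigma$. For $\Jo\ne0$ the base component is cyclotron motion in the constant field $2\pi n\Jo$ about the conserved guiding centre $(P_1,P_2)$, while the fibre component is a great circle on the round $\Sphere{2}$; each orbit is quasi-periodic on a $2$-torus, and for incommensurate cyclotron and fibre frequencies its closure is the whole torus. The deck action, shifting the guiding centre through $\Z{2}$ while rotating $\psi$ by $\pi n$, spreads these tori densely through $S\widetilde\Sigma$ in every position and direction, so the limit set is dense, which is (3). The same quasi-periodic picture, or equally the quadratic volume growth of $\widetilde\Sigma$ together with Ma\~n\'e's estimate, gives $h_{\mathrm{top}}(\varphi)=0$, which is (4).

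The crux is the non-analyticity in (1), and here the obstruction is dynamical rather than topological---$\pi_1(\Sigma)=\Z{2}$ is abelian and poses no Ta\u{\i}manov obstruction. Any analytic integral on $\Sigma$ lifts to a deck-invariant analytic integral on $\widetilde\Sigma$, hence to an analytic function of the cover data $(H,\Jo,F,P_1,P_2)$ invariant under $P_i\mapsto P_i+\pi n\Jo$. For fixed $\Jo\ne0$ such a function is periodic in each $P_i$ with period $\pi n\Jo$, so its Fourier modes carry factors $e^{2ikP_i/(n\Jo)}$; these have an essential singularity at $\Jo=0$, and analyticity there forces every nonconstant mode to vanish. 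Thus every analytic integral reduces to a function of $(H,\Jo,F)$ alone---at most three independent---below the threshold for either complete or non-commutative integrability. This is exactly the denseness of (3) seen analytically: the $\Jo$-dependent period, shrinking to zero as $\Jo\to0$, is what the smooth cutoff absorbs but analyticity cannot. Hence no real-analytic integrals exist, completing (1). The delicate point throughout is the transversal behaviour at $\{\Jo=0\}$: one must check that the flat interpolation genuinely produces independent smooth integrals there while no analytic interpolation can.
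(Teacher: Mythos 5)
Your smooth integrals and your treatment of parts (2)--(4) essentially reproduce the paper's construction in different coordinates: your $\Jo$, $F$, $P_1$, $P_2$ and $\tilde Q=\chi(\Jo)\cos(2P_1/(n\Jo))$ are, up to normalization, the paper's $c$, $2H_2$, $a+c\tilde y$, $b-c\tilde x$ and $f_2=\exp(-c^{-2})\sin 2\pi(a/c+y)$ (your connection metric with round, totally geodesic fibres is the variant ${\bf g}_u$, $u(\r)=2\pi\sin\r$, of Remark (2) in section 3.4, for which the paper asserts Theorem~\ref{thm:a} as well); part (2) is Lemma~\ref{lem:trivial} (mod $2$ reduction of the Euler class); part (3) is proved in the paper by the same mechanism you describe, namely boundedness of the lifted invariant tori in $S\tilde\Sigma$ (properness of $\nu$ on $\{c\neq0\}$) plus recurrence of translation-type flows; for part (4) your Ma\~n\'e/volume-growth argument is a legitimate route (the paper does not spell this part out).

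Where you genuinely diverge is the proof that the integrals cannot be real-analytic, and there the proposal has a real gap. First, you dismiss Ta\u{\i}manov's theorem because $\pi_1(\Sigma)=\Z{2}$ is abelian; that is a misreading. The theorem is not an obstruction that needs complicated $\pi_1$: it asserts that real-analytic integrability forces an open family of invariant tori $\T{}$ with ${\rm im}(\pi_1(\T{})\to\pi_1(\Sigma))$ of finite index, hence of rank $2$ here, and the paper's proof of part (1) combines exactly this with the dynamics --- the smooth fibration has regular fibres that are $3$-tori, compact in the cover and minimal on a dense set, so Lemmas~\ref{lem:taim} and~\ref{lem:min} give $3={\rm s}\leq{\rm r}$ while ${\rm r}+{\rm q}\leq 4$, ${\rm q}=2$ force ${\rm r}\leq 2$: absurd. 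Second, your substitute argument is unproved at its two key steps. (i) The factorization of an arbitrary analytic integral through $(H,\Jo,F,P_1,P_2)$ requires minimality of the flow on a dense set of joint level tori (rational independence of the three frequencies), which you assert via the cyclotron picture but never verify (the paper cites equation (9) of~\cite{Butler:2000a} for precisely this); moreover the factorization holds a priori only on the regular set, which lies inside $\{\Jo\neq0\}$, so ``analyticity at $\Jo=0$'' cannot be applied to the factored function $g$ until $g$ has been extended analytically across $\{\Jo=0\}$ (say by local analytic sections of the momentum map where it is submersive). (ii) More seriously, the inference ``the modes $e^{2ikP_i/(n\Jo)}$ have an essential singularity at $\Jo=0$, so analyticity forces every nonconstant mode to vanish'' is invalid as stated: the coefficients $c_k(H,\Jo,F)$ may decay as $\Jo\to0$, and if they decay like $e^{-1/\Jo^{2}}$ the sum is perfectly smooth across $\{\Jo=0\}$ --- this is exactly how your own $\tilde Q$ and the paper's $f_2,f_3$ are built, so the argument as written cannot distinguish smooth from analytic. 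What analyticity actually forbids is such flat decay, and extracting that requires a genuine argument, e.g.\ differentiating the periodicity identity $g(\ldots,\Jo,P_1+\pi n\Jo,\ldots)=g(\ldots,\Jo,P_1,\ldots)$ repeatedly in $\Jo$ at $\Jo=0$ to show $\partial_{P_1}g$ vanishes to infinite order there and hence, being analytic, vanishes identically. You correctly flag ``the transversal behaviour at $\{\Jo=0\}$'' as the delicate point, but the proof of exactly that point is what is missing.
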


Since the geodesic flow of $(\T{2} \times \Sphere{2}, {\bf g}_{\rm
flat} \times {\bf g}_{\rm round})$ is real-analytically integrable,
the topology of $\Sigma$ does not preclude real-analytically
integrable geodesic flows. How does one know that the first integrals in
Theorem~\ref{thm:a} cannot be real analytic? It is shown that if $\T{} \subset S\Sigma$ is a regular
$\varphi$-invariant torus, then the subgroup ${\rm im\ }(\pi_1(\T{})
\to \pi_1(\Sigma))$ has rank at most $1$. By Ta\u{\i}manov's result,
mentioned above, this precludes real-analytic integrability (see sections 3.5--3.6).

The geodesic flow $\varphi$ is more than just smoothly integrable. Let
us recall some terminology to describe a type
of topologically-tame integrability~\cite{Butler:2005a}: A $C^r$
($1\leq r\leq \infty$) action $\phi : \R{s} \times M \to M$ is {\em
integrable} if there is an open, dense subset $R$ that is covered by
angle-action charts $(\theta,I) : U \to \T{k} \times \R{l}$ which
conjugate $\phi_t$ ($t \in \R{s}$) with a translation-type map
$(\theta,I) \mapsto (\theta + \omega(I)t, I)$ where $\omega :
\R{l} \to {\rm Hom(}\R{s},\R{k}{\rm )}$ is a smooth map. Evidently, there
is an open dense subset $L \subset R$ fibred by $\phi$-invariant
tori~\cite{Bogo:1997a}. Let $f : L \to B$ be the $C^r$ fibration which
quotients $L$ by these invariant tori and let $\Gamma = M - L$ be the
{\em singular set}. If $\Gamma$ is a tamely-embedded polyhedron, then
$\phi$ is called $k$-{\em semisimple} with respect to
$(f,L,B)$.

\begin{thm} \label{thm:b}
The geodesic flow $\varphi$ is $4$-semisimple with respect to an
$(f,L,B)$ such that $f$ has non-trivial monodromy and a trivial
Chern class; it is also $3$-semisimple with respect to an $(f',L',B')$
such that $f'$ has trivial monodromy and a non-trivial Chern
class. The fibres of $f'$ are contractible in $S\Sigma$.
\end{thm}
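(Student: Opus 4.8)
The plan is to realise both fibrations explicitly from the Kaluza--Klein form of the geodesic Hamiltonian and then to read off their monodromy and Chern classes from the Euler number $e$ of the defining line bundle. Writing $(x,y)$ for flat coordinates on $\T{2}$, $(\vartheta,\psi)$ for polar and azimuthal coordinates on the round $\Sphere{2}$-fibre, and $A$ for a connection $1$-form with $dA = e\,dx\wedge dy$, the geodesic Hamiltonian takes the form
\[
H = \tfrac12\big(\pi_x^2 + \pi_y^2\big) + \tfrac12 p_\vartheta^2 + \frac{p_\psi^2}{2\sin^2\vartheta},\qquad \pi_a = p_a + A_a\,p_\psi,
\]
on $T^*\Sigma$. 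First I would record the two global smooth integrals common to both structures: the $\SO{2}$-momentum $p_\psi$, the conserved angular momentum of the fibre rotation, and $\Phi = p_\vartheta^2 + p_\psi^2/\sin^2\vartheta$, the squared norm of the momentum's component along the real-analytic vertical distribution, which is therefore smooth. On the energy level $S\Sigma = \{H = \tfrac12\}$ these satisfy $\pi_x^2 + \pi_y^2 = 1 - \Phi$.

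For the commutative structure I would adjoin the oscillator action $I_y$ of the reduced one--degree--of--freedom system $\tfrac12 p_y^2 + \tfrac12(a + eby)^2$ obtained after fixing $p_x = a$ and $p_\psi = b$; together $(p_\psi,\Phi,I_y)$ are three functions in involution whose common level sets in $S\Sigma$ are the Lagrangian $\T{4}$ fibres of $f$. The non-commutative structure instead uses the kinetic momenta $\pi_x,\pi_y$, which are globally defined via the connection and satisfy the Heisenberg relation $\{\pi_x,\pi_y\} = e\,p_\psi$ with $p_\psi,\Phi$ central; the Mishchenko--Fomenko construction then produces isotropic $\T{3}$ fibres of $f'$, the missing circle being absorbed by the cyclotron motion of the magnetic reduction on $\T{2}$. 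In both cases $L$ (resp. $L'$) is the open dense set of regular levels, and the singular set $\Gamma$ (resp. $\Gamma'$) is supported on the polar sections $\sigma_N,\sigma_S$ (the $\SO{2}$-fixed loci, where $p_\psi = \pm\sqrt{\Phi}$) together with the locus $p_\psi = 0$; each is a real-analytic subset, hence triangulable and tamely embedded, giving $4$- and $3$-semisimplicity respectively.

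The monodromy and Chern-class computations are the heart of the matter. For $f$ the action conjugate to the $x$-motion is $p_x$, but because $A$ is not globally single-valued on $\T{2}$ the substitution $y\mapsto y+1$ sends $\pi_x = p_x + ey\,p_\psi$ to $\pi_x + e\,p_\psi$; transporting the action basis around the corresponding loop in $B$ therefore shifts $p_x \mapsto p_x + e\,p_\psi$, the nontrivial monodromy (an element of $\mathrm{SL}_4(\Z{})$ with off-diagonal entry $e \neq 0$). That the Chern class of $f$ vanishes I would prove by exhibiting a global section of the $\T{4}$-bundle built from the fixed points of the reduced oscillator and the polar axis of the fibre sphere, so that no period obstruction remains. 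For $f'$ the three fibre circles --- the cyclotron circle and the two angles of the round-sphere geodesic --- are each intrinsically and globally defined, so the monodromy is trivial; but the sub-bundle of cyclotron circles is the unit circle bundle of the magnetic flow, whose Euler class equals the flux $e$, so the $\T{3}$-bundle admits no global section and its Chern class is nontrivial. This is precisely the exchange of monodromy and Chern class between the two pictures.

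Finally, contractibility of the fibres of $f'$ in $S\Sigma$ follows because each cyclotron circle bounds a disc in $\T{2}$ --- it is a small geometric circle, not a generator of $\pi_1(\T{2})$ --- while the round-sphere angle directions bound in the fibre $S\Sphere{2} = \RP{3}$; hence $\pi_1$ of a fibre maps to zero in $\pi_1(S\Sigma)$. This dovetails with the rank principle behind Theorem~\ref{thm:a}: the $\T{3}$ fibres have rank $0$, whereas the commutative $\T{4}$ fibres attain rank $1$ through the surviving non-contractible base circle. I expect the main obstacle to be the global assembly of the two computations above --- tracking the single integer $e$ simultaneously as an off-diagonal monodromy entry in the Lagrangian picture and as an Euler class in the isotropic picture --- together with the verification that the degenerations at $\sigma_N,\sigma_S$ and at $p_\psi = 0$ glue into genuinely tamely-embedded polyhedra rather than merely closed invariant sets.
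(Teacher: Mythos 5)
Your proposal has the right geometric picture (magnetic/Kaluza--Klein flow, the flux $e$ controlling monodromy versus Chern class), but the integrals you choose do not produce the fibrations you claim. First, on $S\Sigma$ your oscillator action is functionally dependent on your other integrals: in Landau gauge $\pi_x = p_x + ey\,p_\psi$, $\pi_y = p_y$, so $I_y = (\pi_x^2+\pi_y^2)/(2|e\,p_\psi|) = (1-\Phi)/(2|e\,p_\psi|)$ on the level $H=\tfrac12$. Hence the map $(p_\psi,\Phi,I_y)$ has rank at most $2$, its image is a $2$-dimensional graph rather than a base of a torus fibration, and its level sets are $5$-dimensional, not Lagrangian $\T{4}$'s. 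Second, the kinetic momenta are not first integrals: $\{\pi_x,H\} = e\,p_\psi\,\pi_y \neq 0$, so they cannot define the non-commutative structure either. What is missing in both cases is the pair of conserved guiding-centre angles --- in the paper's notation $\frac{a}{c}+y \bmod 1$ and $\frac{b}{c}-x \bmod 1$ --- which are defined only modulo $1$ (this is exactly why the global integrals $f_2,f_3$ are smooth but not real-analytic), and which supply the circle factors of the bases $B_1 = \T{1}\times\R{*}\times\R{+}\times\R{+}$ and $B'_1 = \T{2}\times\R{*}\times\R{+}\times\R{+}$. The paper's $f_1$ (one angle in the base, $\T{4}$ fibres) and $f'_1$ (both angles, $\T{3}$ fibres) are the corrected versions of your two structures; the deck-transformation shift $\pi_x\mapsto\pi_x+e\,p_\psi$ you describe is indeed the source of the monodromy, but it is monodromy around the guiding-centre circle in the base, a circle your base does not contain.

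The other genuine gap is the contractibility step: for a torus, vanishing of the induced map on $\pi_1$ does not imply the inclusion is null-homotopic. A map $\T{3}\to S\Sigma$ killing $\pi_1$ lifts to the universal cover $S\tilde{\Sigma}$, which is homotopy equivalent to the unit sphere bundle of $T(\R{2}\times\Sphere{2})$ and has non-trivial $\pi_2$ and $\pi_3$, so higher obstructions survive. (Also, your claim that the sphere directions ``bound in $S\Sphere{2}=\RP{3}$'' is false: a geodesic orbit in $S\Sphere{2}$ generates $\pi_1(\RP{3})=\Z{}_2$.) The paper's argument is geometric rather than homotopy-theoretic at the level of $\pi_1$: it extends $f'_1$ to $f'_2$ over $L'_2=\{c\neq 0\}$, uses homotopy lifting to move a regular fibre onto the fibre $\T{}_{s,t}$ adjacent to the elliptic critical fibre $\T{}_{0,0}$, and then uses ellipticity (two of the three cycles are vanishing cycles bounding discs near $\T{}_{0,0}$) together with the fact that $\T{}_{0,0}$ is an $\SO{2}$-orbit to produce an actual null-homotopy inside $L'_2$, hence in $S\Sigma$. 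You would need some mechanism of this kind --- tracking the degeneration of the fibres at the singular locus --- rather than a computation of $\pi_1$ images.
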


\noindent {\em Remarks.} (1) The contractibility of the fibres of
$f'$ and the minimality of $\varphi$ restricted to a generic fibre
of $f'$ implies that $\varphi$ is not real-analytically integrable
(see sections 3.5--3.6). The contractibility of the fibres of $f'$
also implies the density of the geodesic flow's limit set on the
universal cover. (2) If $\Sigma$ is constructed from $E$ with an odd
Euler number, then parts 1,3,4 of Theorem~\ref{thm:a} and
Theorem~\ref{thm:b} are true. The cohomology and homotopy rings of
$\Sigma$ are isomorphic to those of $\T{2} \times \Sphere{2}$, but
$\Sigma$ is not isomorphic to $\T{2} \times \Sphere{2}$. (3) The
example of Theorem~\ref{thm:a} generalizes via the non-degenerate
$2$-step nilmanifolds of~\cite{Butler:2003a}. These examples are
multiply-connected. {\em Are there simply-connected examples?} (4)
Theorem~\ref{thm:a} arose from an attempt to understand whether a
hyperbolic manifold admits a semisimple geodesic flow. If so, then
its invariant tori are contractible and property 3 of
Theorem~\ref{thm:a} is true --- morally, at least. See Theorems 6--7
and Question C of~\cite{Butler:2005a} for precise statements. The
examples of the present note suggest that there may exist a
semisimple geodesic flow on a hyperbolic manifold.

\section{The topology of $\Sigma$}

{\em Terminology}: Let $G$ be a group. The {\rm anti-diagonal}
subgroup is the subgroup $\{ (g,g^{-1})\ :\ g \in G\}$ of $G \times
G$. If $X$ and $Y$ are $G$-spaces, then define $X \times_G Y$ to be
the set of orbits of the anti-diagonal subgroup.
\medskip

Since $\Sigma$ is an $\Sphere{2}$-bundle over $\T{2}$, there is a
canonical principal $\SO{3}$-bundle $\SO{3} \hookrightarrow P \to
\T{2}$ such that $\Sigma = P \times_{\SO{3}} \Sphere{2}$ is an
associated bundle.

\begin{lem} \label{lem:trivial}
The Euler number of $E$ is even iff $P$ (hence $\Sigma$) is trivial.
\end{lem}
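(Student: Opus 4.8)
The plan is to prove Lemma~\ref{lem:trivial} by reducing everything to the classification of $\Sphere{2}$-bundles (equivalently principal $\SO{3}$-bundles) over $\T{2}$ via obstruction theory, and then matching the relevant $\Z{}/2$ invariant to the parity of the Euler number of $E$.

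\smallskip

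First I would set up the classification of principal $\SO{3}$-bundles over $\T{2}$. Since $\T{2}$ is a $2$-complex, such bundles are classified by homotopy classes of maps $\T{2} \to B\SO{3}$, and by obstruction theory these are determined by their restriction to the $1$- and $2$-skeleta. The primary obstruction lives in $H^2(\T{2};\pi_1(\SO{3})) = H^2(\T{2};\Z{}/2) \cong \Z{}/2$, and this is the second Stiefel--Whitney class $w_2(P)$; there is no further obstruction because the relevant higher homotopy groups contribute in dimensions above~$2$. Hence $P$ is trivial if and only if $w_2(P) = 0$, i.e. the single $\Z{}/2$-invariant vanishes. The first step, then, is to verify that this $w_2$ is the \emph{only} invariant, so that triviality of $P$ is equivalent to triviality of $\Sigma = P \times_{\SO{3}} \Sphere{2}$ as an $\Sphere{2}$-bundle (the $\SO{2}$-action on fibres and the associated-bundle construction make these equivalences clean).

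\smallskip

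Second I would relate $w_2(P)$ to the Euler number of $E$. Here $\Sigma$ arises by compactifying the $\C{1}$-bundle $E \to \T{2}$ fibrewise via $\C{} \hookrightarrow \Sphere{2}$. A complex line bundle $E$ over $\T{2}$ is classified by its Euler (first Chern) number $e \in H^2(\T{2};\Z{}) \cong \Z{}$. The structure group of the fibrewise one-point compactification reduces from $\SO{2}$ acting on $\C{}$ to $\SO{3}$ acting on $\Sphere{2}$ (a rotation of $\Sphere{2}$ fixing $\infty$), and under the induced map $H^2(\T{2};\Z{}) \to H^2(\T{2};\Z{}/2)$, which is reduction mod~$2$, the class $e$ maps to its parity. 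The computation to pin down is that $w_2(\Sigma)$ equals the mod-$2$ reduction of $e$; this is exactly the statement that the obstruction to a vertical framing over the $2$-skeleton is the parity of the twisting of $E$. I would do this by comparing the clutching data of $E$ over the top cell of $\T{2}$ (a single integer $e$, the winding of the transition function) with the clutching data of the compactified $\Sphere{2}$-bundle in $\pi_1(\SO{3}) = \Z{}/2$, under the map $\pi_1(\SO{2}) = \Z{} \to \pi_1(\SO{3}) = \Z{}/2$ induced by inclusion, which is again reduction mod~$2$.

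\smallskip

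Putting these together: $P$ is trivial $\iff w_2(P) = 0 \iff e \bmod 2 = 0 \iff e$ is even, which is the claim. The main obstacle I anticipate is the careful bookkeeping in the second step — ensuring that the inclusion $\SO{2} \hookrightarrow \SO{3}$ really does induce the mod-$2$ reduction on $\pi_1$ and that the clutching function of the compactified bundle is the image of the clutching function of $E$, rather than twice it or some other multiple. Getting this identification exactly right (so that the parity, not some other residue, is what survives) is the delicate point; the rest is standard obstruction theory on a $2$-complex.
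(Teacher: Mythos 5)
Your proposal is correct and follows essentially the same route as the paper: both are obstruction-theoretic computations over the $2$-skeleton of $\T{2}$, and both hinge on the fact that the inclusion $\SO{2} \hookrightarrow \SO{3}$ induces reduction mod $2$ on $\pi_1$, so that the $\SO{3}$-obstruction (your $w_2(P)$, the paper's cochain $\theta'$) is exactly the parity of the Euler number of $E$. The paper packages this as the obstruction cochain to extending a section over the single $2$-cell rather than via clutching functions and maps to the classifying space, but the content is identical.
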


\begin{proof} It suffices to show that $P$ admits a section under the
stated condition. To do this, let us recall some obstruction theory. Let
$G$ be a compact, connected Lie group and let $G \hookrightarrow B \to
X$ be a principal $G$-bundle over a CW complex $X$. Let us
try to construct a section of $B$. Let $X_p$ be the $p$-skeleton of
$X$ and let $\D_p$ be $X$'s cellular chain group in dimension
$p$. Assume that $s_p : X_p \to B|X_p$ is a section of $B|X_p$. The
obstruction to extending $s_p$ to a section $s_{p+1}$ over $X_{p+1}$
is a cochain $\theta \in {\rm Hom}(\D_{p+1};
\pi_p(G))$ -- the coefficients are untwisted because $B$ is a
principal fibre bundle.

The cochain $\theta$ can be defined on the
generators of $\D_{p+1}$ as follows. Let $f_i : (\Disc{p+1},
\Sphere{p}) \to (X_{p+1},X_p)$ be the attaching map of a $p$-cell and
$\tau_i = f_{i,*} [\Disc{p+1},
\Sphere{p}]$ be the induced cellular chain (orient $\Disc{p+1}$
arbitrarily). The pullback bundle $f_i^*B$ is trivial since
$\Disc{p+1}$ is contractible, so let $\mu_i : f_i^*B \to G$ be a
principal fibre-bundle map. Define $\langle \theta, \tau_i \rangle
\in \pi_p(G)$ to be the homotopy class of the map
$$
\xymatrix{
\Sphere{p} \ar[r]^{f_i} & X_p \ar[r]^{s_p} & B|X_p \ar[r]^{\rm{incl.}} &
 f_i^*B \ar[r]^{\mu_i} & G.
}
$$
The obstruction cochain $\theta$
measures the compatibility of the trivialization of $B|X_p$ induced by
$s_p$ and the trivialization of $f_i^*B$ induced by the contractibility
of $\Disc{p+1}$.

$B|X_1$ admits a section $s_1$ since $\pi_1(B)$ acts trivially on
$\pi_*(G)$. Assume that $X_2$ contains a single $2$-cell with
attaching map $f$. The obstruction cochain is the homotopy class of
$$
\xymatrix{
\Sphere{1} \ar[r]^{f} & X_1 \ar[r]^{s_1} & B|X_1 \ar[r]^{\rm{incl.}} &
 f^*B \ar[r]^{\mu} & G.
}
$$

Let $E_1 \subset E$ be the unit-circle bundle, which is also a
principal $\SO{2}$-bundle. When $(G,B,X)=(\SO{2},E_1,\T{2})$, the
above discussion shows that the obstruction cochain $\theta$ lies in
${\rm Hom}(\Z{}; \pi_1(\SO{2})) = \Z{}$ and it can be identified with
the Euler class of $E$. When $(G,B,X) = (\SO{3},P,\T{2})$, the
obstruction cochain $\theta'$ lies in ${\rm Hom}(\Z{};
\pi_1(\SO{3})) = \Z{}_2.$ Moreover, it is clear that the map
$\pi_1(\SO{2}) \to \pi_1(\SO{3})$ maps $\theta \to \theta'$, i.e. the
reduction of $\theta$ mod $2$ is $\theta'$. This proves the lemma.
\end{proof}

\section{The metrics}

The  metric ${\bf g}$ is constructed from an $\SO{2} \times \SO{2}$-invariant metric
on $E_1 \times \Sphere{2}$ using the representation
$$\Sigma = E_1~\times_{\SO{2}}~\Sphere{2}.$$ We
construct an $\SO{2}$-invariant metric on $E_1$, first.

\subsection{$E_1$} \label{ssec:E1}

The unit-circle bundle $E_1$ is diffeomorphic to the nilmanifold
$$\Gamma \backslash N,$$
where $N=Nil$ is the unipotent group $3 \times 3$ upper triangular
matrices and $$\Gamma = \left\langle \alpha = \left( \begin{array}{ccc} 1 & 1 &
0\\0 & 1 & 0\\ 0 & 0 & 1 \end{array} \right),\ \beta = \left( \begin{array}{ccc} 1 & 0 &
0\\0 & 1 & 1\\ 0 & 0 & 1 \end{array} \right),\ \gamma = \left(
\begin{array}{ccc} 1 & 0 & \frac{1}{k}\\0 & 1 & 0\\ 0 & 0 & 1 \end{array}
\right)\ \right\rangle,$$
where $k\neq 0$ is the Euler number of $E_1$. Define a riemmannian
metric ${\bf g}_1$ on $E_1$ by declaring that $\log \alpha$, $\log
\beta$, $\log \gamma$ is an orthonormal, left-invariant frame on $N$. Let $x : \Gamma
\backslash N \to \Sphere{1}$ (resp. $y : \Gamma
\backslash N \to \Sphere{1}$) be the map induced by $g
\in N
\mapsto \langle \log g, \log \alpha \rangle$ (resp. by $\beta$). The
map $(x,y) : E_1 \to \T{2}$ is a non-canonical form of the fibre
bundle map $E_1 \to \T{2}$.

\def\d{{\rm d}}
The metric ${\bf g}_1$ has the explicit form at $\Gamma g$
$${\bf g}_1 = \d x^2 + \d y^2 + (\d z-\frac{1}{2}(x\d y-y\d x))^2,$$
where $\log g = x \log \alpha + y \log \beta + z \log \gamma$.

The left-trivialization of $T^*N$ induces a trivialization of $T^*E_1
= \Gamma \backslash N \times \n^*$, where $\n^*$ is the dual space of
$N$'s Lie algebra. Let $a : \n^* \to \R{}$ (resp. $b,c$) be the linear
form induced by $\log \alpha$ (resp. $\log \beta$, $\log \gamma$). The
hamiltonian of ${\bf g}_1$ is
$$H_1 = \frac{1}{2}(a^2 + b^2 + c^2),$$
which has smooth first integrals
$$f_1 = c,\ \ f_2 = \exp(-c^{-2}) \sin 2\pi\left(\frac{a}{c} + y \right) , \ \
f_3 = \exp(-c^{-2}) \sin 2\pi\left(\frac{b}{c} - x \right) .$$

The centre of $N$ is $Z(N) = \exp(\R{} \log \gamma)$ while the centre
of $\Gamma$ is $Z(\Gamma) = \exp(\Z{} \log \gamma)$. $Z(N)/Z(\Gamma)$
acts freely on $\Gamma\backslash N$. From the identification $\SO{2} =
Z(N)/Z(\Gamma)$, we see that the fibres of $E_1$ are just the orbits
of $Z(N)/Z(\Gamma)$. Moreover, the momentum map of this
$\SO{2}$-action on $T^* E_1$ is
$$\Psi_1(\Gamma g, \mu) = c(\mu),$$
for all $g \in N$, $\mu \in \n^*$.

\subsection{$\Sphere{2}$} \label{ssec:S2}
\def\r{{\tt r}}

Let $\Sphere{2} = \{ \xi \in \R{3}\ : \ |\xi|=1 \}$ and let $T^*\Sphere{2}
= \{ (\xi,p) \in \R{3} \times \R{3}\ : \ |\xi|=1, \langle \xi,p \rangle=0
\}$ where $\langle,\rangle$ is the euclidean metric on $\R{3}$. Let
${\bf g}_2$ be the round metric on $\Sphere{2}$ induced by the
euclidean metric. Let $\mu_i$ be the standard orthonormal basis of $so_3\R{}$ and
define the 1-form $\alpha_i$ at $\xi$ to be $\alpha_i(\bullet) = \langle
\bullet, \mu_i \xi\rangle$. The round metric is expressed as
$${\bf g}_2 = \alpha_1^2 + \alpha_2^2 + \alpha_3^3 = \d \r^2 + 4 \pi^2 \sin^2 \r\ \d \phi^2,$$
where $(\r,\phi \bmod 1)$ are radial coordinates in which the vector field $\mu_1\xi$ equals $\frac{1}{2\pi} \frac{\partial\ }{\partial \phi}$.

The momentum map of the $\SO{2}$ action on $T^*
\Sphere{2}$ is
$$\Psi_2(\xi,p) = \langle p, \mu_1 \xi \rangle = \frac{1}{2\pi} p_{\phi},$$ while the hamiltonian of
the metric ${\bf g}_2$ is
$$H_2 = \frac{1}{2}\left(  \langle p, \mu_1 x \rangle^2 +  \langle p, \mu_2 x
\rangle^2 +  \langle p, \mu_3 x \rangle^2 \right) = \frac{1}{2}\left( p_{\r}^2 + (2\pi\sin\r)^{-2} \ p_{\phi}^2 \right).$$

\subsection{$E_1 \times \Sphere{2}$ and $\Sigma$} \label{ssec:sigma}

The riemannian metric ${\bf g}_1 \times {\bf g}_2$ has hamiltonian
$H_1+H_2$ and first integrals $f_1,f_2,f_3$ \& $\Psi_1,\Psi_2$
[N.B. $f_1=\Psi_1$]. The metric is also $\SO{2} \times
\SO{2}$-invariant, hence it is invariant under the anti-diagonal
action of $\SO{2}$. Therefore, there is a well-defined submersion
metric ${\bf g}$ on $\Sigma = E_1
\times_{\SO{2}} \Sphere{2}$. The momentum map of the anti-diagonal
$\SO{2}$-action is $\Psi = \Psi_1 - \Psi_2$.

It is well-known that $T^*\Sigma$ is symplectomorphic to
$\Psi^{-1}(0)/\SO{2}$. It is trivial to verify that

\begin{lem}
$H_1, H_2, f_1, f_2, f_3$ are functionally independent on an open
dense subset of $\Psi^{-1}(0)$. Moreover, $H_1,H_2,f_1,f_2$ Poisson
commute while $H_1, H_2, f_1$ Poisson commute with $f_2, f_3$.
\end{lem}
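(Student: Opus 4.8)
The plan is to obtain all but two of the Poisson brackets from the product structure of the phase space, to dispatch the remaining two by an invariance argument, and to establish functional independence by exhibiting a single non-vanishing minor. The phase space is $T^*E_1\times T^*\Sphere{2}$ with the product symplectic form; $H_2,\Psi_2$ are pulled back from $T^*\Sphere{2}$, while $H_1,f_1,f_2,f_3,\Psi_1$ are pulled back from $T^*E_1$. Functions pulled back from complementary factors Poisson commute, so $\{H_2,H_1\}=\{H_2,f_1\}=\{H_2,f_2\}=\{H_2,f_3\}=0$ at once; and since $f_1,f_2,f_3$ are first integrals of $H_1$ (\S\ref{ssec:E1}), $\{H_1,f_1\}=\{H_1,f_2\}=\{H_1,f_3\}=0$. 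The only brackets left to evaluate are $\{f_1,f_2\}$ and $\{f_1,f_3\}$.

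These vanish because $f_1=c=\Psi_1$ is the momentum map of the fibre $\SO{2}$-action on $T^*E_1$, so its Hamiltonian flow is the cotangent lift of that action, and I claim that $f_2,f_3$ are invariant under it. The action is right translation by the central subgroup $\exp(\R{}\log\gamma)$; as $\log\gamma$ is central, the Baker--Campbell--Hausdorff formula gives $z\mapsto z+t$ while $x,y$ are unchanged, and the left-trivialized momenta $a,b,c$ are fixed because $\Ad{h}$ is trivial on the centre. Since $f_2$ and $f_3$ depend only on $(a,b,c,x,y)$ and not on $z$, they are invariant, whence $\{f_1,f_2\}=\{f_1,f_3\}=0$. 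Assembling these relations, the quadruple $H_1,H_2,f_1,f_2$ is pairwise commuting, and each of $H_1,H_2,f_1$ commutes with both $f_2$ and $f_3$; note that $\{f_2,f_3\}$ is not asserted to vanish, in agreement with the asymmetry of the statement.

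For functional independence, observe that $H_2$ is the only one of the five functions involving the $T^*\Sphere{2}$ variables and that $dH_2\neq0$ wherever $p_{\r}\neq0$; it therefore suffices to show that $dH_1,df_1,df_2,df_3$ are independent on an open dense subset of $T^*E_1$ and that this persists after restriction to $\Psi^{-1}(0)$. Writing the four differentials in the coframe $(dc,da,db,dx,dy)$ --- none of them involves $dz$ --- a short computation shows that the $4\times4$ minor on the columns $(dc,da,db,dy)$ equals a non-zero constant times $a\,c^{-1}\exp(-2c^{-2})\cos 2\pi u\,\cos 2\pi v$, where $u=a/c+y$ and $v=b/c-x$. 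This is non-zero off the closed, nowhere-dense set $\{c=0\}\cup\{a=0\}\cup\{\cos 2\pi u=0\}\cup\{\cos 2\pi v=0\}$; staying off $\{c=0\}$ is essential, since $f_2,f_3$ and their differentials vanish to infinite order there. To see that the five functions remain independent on $\Psi^{-1}(0)$, I would check that $dH_1,df_1,df_2,df_3,dH_2,d\Psi$ are jointly independent: $dH_2$ carries the only $dp_{\r}$ component and $d\Psi=dc-\tfrac{1}{2\pi}\,dp_{\phi}$ supplies the only remaining $dp_{\phi}$ component, so adjoining them to the four $T^*E_1$-forms preserves the rank, and restricting to $\ker d\Psi=T\Psi^{-1}(0)$ then leaves the five functions independent on an open dense subset of $\Psi^{-1}(0)$. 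I expect the bracket computations to be routine; the genuine care is needed in this final step, namely guaranteeing that independence survives the restriction to the constraint $\Psi^{-1}(0)$ while the chosen open dense set avoids the degeneracy locus $\{c=0\}$.
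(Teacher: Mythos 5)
Your proposal is correct, and it is essentially the argument the paper has in mind: the paper gives no proof at all (the Lemma is prefaced only by ``It is trivial to verify that''), and your verification --- commutation of brackets from the product symplectic structure, $\{f_1,f_2\}=\{f_1,f_3\}=0$ because $f_1=c=\Psi_1$ generates the lifted central $\SO{2}$-action which fixes $a,b,c,x,y$, and independence from the explicit minor, which indeed equals $4\pi^2\,a\,c^{-1}\exp(-2c^{-2})\cos 2\pi u\,\cos 2\pi v$ up to sign --- is exactly the routine check being asserted. The one step you flag but leave informal, density inside $\Psi^{-1}(0)$ of the locus where the six differentials are independent, follows in one line: each bad locus ($c=0$, $a=0$, $\cos 2\pi u=0$, $\cos 2\pi v=0$, and vanishing radial momentum on the sphere factor) is the zero set of a function whose differential is everywhere linearly independent of $d\Psi$, so each meets $\Psi^{-1}(0)$ transversally in a nowhere-dense subset of that level.
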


This Lemma proves that the geodesic flow of $(\Sigma,{\bf g})$ is
completely integrable with the integrals induced by $H_1,H_2,f_1,f_2$;
and it is non-commutatively integrable with the integrals induced by
$H_1,H_2,f_1,f_2,f_3$. Note that $f_2$ and $f_3$ are only smooth.

\subsection{An explicit expression for ${\bf g}$} \label{ssec:g}

\def\s{{\tt s}}
\def\t{{\tt t}}

Let $\hat{E}_1 = N/Z(\Gamma)$,
the universal abelian covering space of $E_1$, and let
$\hat{\Sigma} = \hat{E}_1 \times_{\SO{2}} \Sphere{2}$ be the universal covering
space of $\Sigma$. There is the commutative diagram of riemannian
im/submersions
$$
\xymatrix{
(\R{2} \times \Sphere{1} \times \Sphere{2}, \hat{\bf K}) \ar@{..>}[d]^{\bar{\rho}} \ar@{..>}[r]^{W} & (\hat{E}_1 \times \Sphere{2}, \hat{{\bf g}}_1 + {\bf g}_2) \ar[r]\ar[d] & (\hat{E}_1 \times \Sphere{2}, {\bf g}_1 + {\bf g}_2) \ar[d]\\
(\R{2} \times \Sphere{2}, \hat{\bf k}) \ar@{..>}[r]^{w} & (\hat{\Sigma},\hat{{\bf g}}) \ar[r] & (\Sigma,{\bf g}),
}
$$
where dotted arrows indicate maps that remain to be defined.
An explicit formula for ${\bf g}$ in local coordinates is provided by
$\hat{\bf k}$.

Let $(x,y,z \bmod 1)$ be coordinates on $\hat{E}_1 = N/Z(\Gamma)$
and let $(\r,\phi \bmod 1)$ be polar coordinates on $\Sphere{2}$
(section~\ref{ssec:E1}--\ref{ssec:S2}). Identify $\SO{2}$ with
$\R{}/\Z{}$. The anti-diagonal action of $\SO{2}$ on $\hat{E}_1
\times \Sphere{2}$ is
$$\theta*(x,y,z \bmod 1,\r,\phi \bmod 1) = (x,y,z+\theta \bmod 1, \r, \phi-\theta \bmod 1),$$
for all $\theta$ in $\SO{2}$. Let $\s = z+\phi$ and $\t = z$, so that
$(x,y,\r,\s,\t)$ is a system of local coordinates on $\hat{E}_1 \times \Sphere{2}$.
This defines $W$; since the
$\SO{2}$-orbits are the circles $(x,y,\r,\s)=constant$, this also defines
$\bar{\rho}$ and $w$.
Since $\hat{\bf K} = W^*(\hat{\bf g}_1 + {\bf g}_2)$,
\begin{eqnarray*}
\hat{\bf K} &=&
\d x^2 + \d y^2 + (\d\t - \frac{1}{2}(x\d y - y\d x))^2 +\\
&& \d\r^2 + 4\pi^2 \sin^2 \r\, ( \d \s - \d\t )^2.
\end{eqnarray*}
The frame $\left\{ \frac{\partial\ }{\partial x}-\frac{1}{2}y \frac{\partial\ }{\partial \s} , \frac{\partial\ }{\partial y}+ \frac{1}{2}x\frac{\partial\ }{\partial \s} ,  \frac{\partial\ }{\partial \r},  \frac{\partial\ }{\partial \s} \right\}$ on
$\R{2} \times \Sphere{2}$ horizontally lifts to the frame
{
\def\X{{\mathcal X}}
\def\Y{{\mathcal Y}}
\def\R{{\mathcal R}}
\def\S{{\mathcal S}}
\def\T{{\mathcal T}}
$\left\{
\X =  \frac{\partial\ }{\partial x} - \frac{1}{2}y\left(  \frac{\partial\ }{\partial \s} +  \frac{\partial\ }{\partial \t} \right), \right.$ $
\Y = \frac{\partial\ }{\partial y} + \frac{1}{2}x\left(  \frac{\partial\ }{\partial \s} +  \frac{\partial\ }{\partial \t} \right), $ $
\R = \frac{\partial\ }{\partial \r} , $ $
\left.
\S = \frac{\partial\ }{\partial \s} + \frac{4\pi^2\sin^2 \r}{1+4\pi^2\sin^2 \r}
\frac{\partial\ }{\partial \t}
\right\}
$ on ${\bf R}^2 \times \Sphere{1} \times \Sphere{2}$.
Along with $\T = \frac{\partial\ }{\partial \t}$, this forms a $\hat{\bf K}$-orthogonal frame. A simple calculation shows that $\X,\Y$ and $\R$ have unit norm
and the norm of $\S$ is $\sqrt{\frac{4\pi^2\sin^2 \r}{1+4\pi^2\sin^2 \r} }$. Therefore
$$\hat{\bf k} = \d x^2 + \d y^2 + \d \r^2 + \frac{4\pi^2\sin^2 \r}{1+4\pi^2\sin^2 \r}\, (\d \s - \frac{1}{2}(y\d x - x\d y))^2.$$
}
\medskip
\noindent{\em Remarks}. (1) Clearly $\hat{\bf
k}|_{\{x=const.,y=const.\}}$ equals $\d \r^2 + \frac{4\pi^2\sin^2
\r}{1+4\pi^2\sin^2 \r}\, \d \s^2$, which is a non-round
$\SO{2}$-invariant metric on $\Sphere{2}$. In addition, it is clear
that these fibres are totally geodesic. (2) If $u$ is a smooth
function which vanishes at $\r =0, \pi$, has $u'(0)=2\pi$,
$u'(\pi)=-2\pi$ and the even derivatives of $u$ vanish at $\r =0,
\pi$, then
$$\hat{\bf k}_u = \d x^2 + \d y^2 + \d \r^2 + u(\r)^2 \, (\d\s - \frac{1}{2}(y\d x - x\d y))^2$$
defines a smooth metric on $\R{2}\times \Sphere{2}$. A simple computation shows that
$\hat{\bf k}_u$ is invariant under the deck-transformation group and so induces
a metric ${\bf g}_u$ on $\Sigma$. Theorem 1.1 is true for $(\Sigma,{\bf g}_u)$, and
if $u$ is analytic, then Theorem 1.2 is true also.
If $u(\r)=2\pi \sin \r$, then
$\{x=const.,y=const.\}$ is a totally geodesic round $\Sphere{2}$
but $(\Sigma, {\bf g}_u)$ is not isometric to $(\T{2} \times \Sphere{2},
{\bf g}_{{\rm flat}} + {\bf g}_{{\rm round}})$.

\subsection{The limit set}

Say that a point is recurrent for a flow if its orbit visits
arbitrarily small neighbourhoods of itself in both forward and
backward time. The limit set is the closure of the set of
recurrent points.

Let $(\tilde{\Sigma}, \tilde{\bf g} )$ be the universal riemannian
cover of $(\Sigma,{\bf g})$ and let $\tilde{\varphi}$ be the geodesic
flow of $(\tilde{\Sigma}, \tilde{\bf g} )$. The functions $x,y,a,b,c$
induce well-defined smooth functions on $T^*\Sigma$ and hence on
$T^*\tilde{\Sigma}$. Let $\tilde{x}$ ($\tilde{y}$) be the
single-valued function on $T^*\tilde{\Sigma}$ induced by $x$
($y$). The map
$$\nu = ( a + c\tilde{y}, b-c\tilde{x}, c ), \qquad \nu :
T^*\tilde{\Sigma} \to \R{3}$$ is a first integral of $\tilde{\varphi}$
(it is a coordinatized incarnation of the momentum map of $N$'s left
action on $T^*\tilde{\Sigma}$). For $\nu_o=(\nu_1,\nu_2,\nu_3) \in
\R{3}$ such that $\nu_3
\neq 0$, it is apparent that if $\nu(P) = \nu_o$, then
$$|\tilde{x}(P)| \leq \left|\frac{b(P)+\nu_2}{\nu_3}\right|, \qquad |\tilde{y}(P)| \leq
\left| \frac{a(P)+\nu_1}{\nu_3}\right|. \eqno (*)$$
On $S\tilde{\Sigma}$ the functions $a$ and $b$ are bounded
above by unity, so
\begin{lem}
The map $\nu | \{c \neq 0\} \cap S\tilde{\Sigma}$ is proper.
\end{lem}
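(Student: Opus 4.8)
The plan is to read the statement as the assertion that $\nu$ restricted to $U := \{c\neq 0\}\cap S\tilde{\Sigma}$ is proper \emph{onto the open set} $\{c\neq 0\}\subset\R{3}$; that is, for every compact $K\subset\R{3}$ disjoint from the plane $\{\nu_3=0\}$ the set $\nu^{-1}(K)$ is compact. The first point I would record is that this interpretation is forced and is exactly what makes the lemma work. Since the third component of $\nu$ is $c$ itself, the image $\nu(U)$ lies in $\{\nu_3\neq 0\}$ and, conversely, $\nu(P)\in K$ already implies $c(P)=\nu_3(P)\neq 0$, so $\nu^{-1}(K)\subseteq U$ automatically. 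Thus when $K$ is bounded away from $\{\nu_3=0\}$ no sequence in the preimage can be squeezed toward $c=0$, which is precisely the degeneration that would otherwise destroy properness.

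The geometric input I would isolate is that the coordinate projection $(\tilde{x},\tilde{y}):S\tilde{\Sigma}\to\R{2}$ is itself proper. By section~\ref{ssec:g} the universal cover is $\tilde{\Sigma}\cong\R{2}\times\Sphere{2}$, with $(\tilde{x},\tilde{y})$ the Euclidean factor, and the unit-cosphere bundle $S\tilde{\Sigma}\to\tilde{\Sigma}$ has compact fibre $\Sphere{3}$. Both this bundle projection and the projection $\R{2}\times\Sphere{2}\to\R{2}$ have compact fibres, so their composite $(\tilde{x},\tilde{y})$ is a composition of proper maps and hence proper; concretely, the $(\tilde{x},\tilde{y})$-preimage of any closed ball in $\R{2}$ is a compact subset of $S\tilde{\Sigma}$.

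With these two facts the argument I would give is short. Given $K$ as above, compactness of $K$ supplies $\delta>0$ and $M>0$ with $|\nu_3|\geq\delta$ and $|\nu_1|,|\nu_2|\leq M$ on $K$. For $P\in\nu^{-1}(K)$ I would feed the estimate $(*)$ together with the uniform bounds $|a|\leq 1$, $|b|\leq 1$ on $S\tilde{\Sigma}$ to obtain
$$|\tilde{x}(P)|\leq\frac{1+M}{\delta},\qquad |\tilde{y}(P)|\leq\frac{1+M}{\delta}.$$
Hence $\nu^{-1}(K)$ is contained in the $(\tilde{x},\tilde{y})$-preimage of a fixed closed ball, which is compact by the previous paragraph. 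Since $\nu$ is continuous on $S\tilde{\Sigma}$ and $K$ is closed, $\nu^{-1}(K)$ is closed; a closed subset of a compact set is compact, proving the lemma.

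I expect the only genuine obstacle to be conceptual rather than computational: one must pin down the correct codomain $\{c\neq 0\}$ (the bare map into $\R{3}$ is \emph{not} proper, since one can let $c\to 0$ while $\tilde{x},\tilde{y}\to\infty$ with $c\tilde{x},c\tilde{y}$ bounded, so that $\nu$ stays bounded yet the point runs off to infinity), and one must justify the properness of $(\tilde{x},\tilde{y})$ from the product/bundle structure of $S\tilde{\Sigma}$. Once those are in place, everything reduces to inserting the given estimate $(*)$ and the bounds on $a,b$ into a standard ``closed $\cap$ bounded'' compactness argument.
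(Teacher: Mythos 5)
Your proof is correct and follows essentially the same route as the paper's: the paper's entire argument is the estimate $(*)$ combined with the bounds $|a|,|b|\leq 1$ on $S\tilde{\Sigma}$, which is exactly your core step. The extra details you supply --- reading the codomain as $\{c\neq 0\}\subset\R{3}$ (indeed forced, since the map into all of $\R{3}$ is not proper) and the properness of $(\tilde{x},\tilde{y})$ via the compact-fibre bundle structure --- are precisely the routine points the paper leaves implicit.
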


Let ${\bf S} \subset S\Sigma$ be a regular invariant $3$-torus and let
$\tilde{\bf S} \subset S \tilde{\Sigma}$ be a lift of this torus. Since
$\tilde{\bf S}$ is regular it lies in $\{c\neq 0\}$, and therefore it
is a closed subset of a fibre of $\nu$. Hence $\tilde{\bf S}$ is
compact. But $\tilde{\varphi} | \tilde{\bf S}$ is a translation-type
flow, so its limit set is $\tilde{\bf S}$. Since the union of regular
tori is dense, this proves that the limit set is dense. Since the
limit set is closed, it is $S\tilde{\Sigma}$. This proves part 3 of
Theorem~\ref{thm:a}.

\subsection{Real-analytic non-integrability}
Let's complete the proof of part 1 of Theorem~\ref{thm:a}. For the next
seven paragraphs inclusive $(\Sigma, {\bf g})$ is a compact real-analytic
riemannian manifold with geodesic flow $\varphi$ and first Betti number q.
Assume that $\varphi$ is real-analytically integrable
(or more generally, geometrically simple~\cite{Taimanov:1988a}) and let
${\mathcal T}$ be the induced singular fibration of $S\Sigma$.
A regular fibre of ${\mathcal T}$ is an
isotropic torus.
Let the lift of $\bullet$ on $S\Sigma$
to the universal cover $S\tilde{\Sigma}$ be denoted by $\tilde{\bullet}$.

\begin{lem} \label{lem:taim}
There is an open set ${\mathcal U}$ of fibres of ${\mathcal T}$ such that
for each $\T{} \in {\mathcal U}$, $\tilde{\T{}}$ is diffeomorphic to a
cylinder $\T{{\rm r}}\times \R{{\rm q}}$ where ${\rm r}+{\rm q} \leq \dim \Sigma$.
\end{lem}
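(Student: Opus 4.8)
The plan is to invoke the structure theory for real-analytically integrable (or geometrically simple) geodesic flows, as developed by Ta\u{\i}manov, and to track what it says about the universal cover. First I would recall that real-analytic integrability forces the singular fibration $\mathcal{T}$ to be \emph{geometrically simple} in Ta\u{\i}manov's sense: the regular fibres are isotropic tori, and because everything is real-analytic, the set of regular fibres and its image under the period/frequency map carry a real-analytic structure. The key input is that a regular fibre $\T{}$ of $\mathcal{T}$, being a Lagrangian (here isotropic) torus swept out by the flow together with the commuting first integrals, admits an affine structure coming from the action variables. I would then pass to the universal cover $S\tilde\Sigma$ and examine the lift $\tilde{\T{}}$ of such a torus.

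The central mechanism is the following. Fix a regular fibre $\T{}$ and consider the map $\pi_1(\T{}) \to \pi_1(S\Sigma) \to \pi_1(\Sigma)$ induced by inclusion. The lift $\tilde{\T{}}$ is a covering space of $\T{}$ whose deck group is precisely the \emph{image} of $\pi_1(\T{})$ in $\pi_1(\tilde\Sigma\text{-deck}) = \pi_1(\Sigma)$; equivalently $\tilde{\T{}}$ is the quotient of the universal cover of $\T{}$ by the kernel of this composite map. Since $\T{}\simeq \T{n}$ is a torus with $n = \dim\T{} \le \dim\Sigma$, its universal cover is $\R{n}$ and every subcover is diffeomorphic to $\T{\rm r}\times\R{\rm s}$ with ${\rm r}+{\rm s}=n$. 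Here ${\rm r}$ is the rank of the kernel subgroup (the homotopy that survives in the cover as a torus factor) and ${\rm s}$ is the corank, i.e. the rank of the image subgroup in $\pi_1(\Sigma)$. Relabelling ${\rm s}$ as ${\rm q}$ in the statement, one gets $\tilde{\T{}}\cong \T{\rm r}\times\R{\rm q}$ with ${\rm r}+{\rm q}=n\le\dim\Sigma$, which is exactly the claimed cylinder. Thus the diffeomorphism type of the lift is governed entirely by the rank of the image of $\pi_1(\T{})$ in $\pi_1(\Sigma)$.

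It remains to produce the \emph{open set} $\mathcal{U}$ of fibres for which this holds uniformly, and this is the step I expect to be the main obstacle. The rank ${\rm q}$ of the image subgroup is a priori only lower semicontinuous in $\T{}$, so I cannot simply assert that a single cylinder type occurs on an open set by naive continuity. The resolution should come from the real-analytic (geometrically simple) structure: over the connected open set of regular fibres, the action variables give an affine developing map, and the period lattice varies real-analytically. One shows that the inclusion-induced map on $\pi_1$ is \emph{locally constant} on the regular set away from a proper analytic subvariety where the rank can jump, so on a suitable connected open component $\mathcal{U}$ the image subgroup, and hence ${\rm r}$ and ${\rm q}$, is constant. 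Concretely I would argue that the map sending a regular fibre to the conjugacy class of $\im(\pi_1(\T{})\to\pi_1(\Sigma))$ is locally constant because $\pi_1(\Sigma)$ is discrete while the fibres vary continuously in a fibration; the only subtlety is ruling out collapse of generators, which the regularity of the fibres and the constancy of the rank of the momentum/period map secure. This produces $\mathcal{U}$ and completes the proof.
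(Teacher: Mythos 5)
Your proposal has a genuine gap, and it sits exactly where the paper's proof puts all of its weight. In the lemma, ${\rm q}$ is not a free label that you may attach to whatever corank comes out of your argument: it is the first Betti number of $\Sigma$, fixed before the statement ("\ldots with geodesic flow $\varphi$ and first Betti number q"). The paper's proof pins the non-compact factor down to this specific ${\rm q}$ by invoking Ta\u{\i}manov's theorem: real-analytic integrability (geometric simplicity) yields an open set ${\mathcal U}$ of regular tori such that for each $\T{} \in {\mathcal U}$ the image of $\pi_1(\T{}) \to \pi_1(\Sigma)$ has \emph{finite index}. A finite-index subgroup maps onto a finite-index subgroup of $H_1(\Sigma;\Z{})/\mathrm{torsion} \cong \Z{{\rm q}}$, so the image has rank ${\rm q}$; this is what produces the splitting $\T{} = \T{{\rm r}} \times \T{{\rm q}}$ with $\pi_1(\T{{\rm q}})$ injecting into $\pi_1(\Sigma)$, and hence $\tilde{\T{}} \cong \T{{\rm r}} \times \R{{\rm q}}$ with the $\R{}$-factor of dimension exactly the Betti number.

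Your argument correctly observes that the lift of $\T{}$ to the universal cover is the cover associated to the kernel of $\pi_1(\T{}) \to \pi_1(\Sigma)$, hence a cylinder $\T{{\rm r}} \times \R{{\rm s}}$ where ${\rm s}$ is the rank of the image subgroup. But the step "relabelling ${\rm s}$ as ${\rm q}$" is precisely the missing content: nothing in your proposal shows ${\rm s} = b_1(\Sigma)$. The statement you actually prove --- the lift of an invariant torus is \emph{some} cylinder --- is true for any smooth integrable geodesic flow (a contractible invariant torus gives ${\rm s}=0$) and carries no information about real-analyticity; in particular it cannot support the application, where the contradiction is derived from ${\rm r} \geq 3$, ${\rm r} + {\rm q} \leq 4$, and ${\rm q} = 2$. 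Relatedly, you identify the openness of ${\mathcal U}$ as the main obstacle and propose a local-constancy argument for it, but openness comes for free from Ta\u{\i}manov's theorem (and in any case fibres of a fibration over a connected regular base are isotopic, so the image subgroup is locally constant up to conjugacy); no amount of semicontinuity or local constancy can substitute for the finite-index property, which is the one genuinely analytic input and is absent from your proof.
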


\begin{proof} Ta\u{\i}manov's Theorem says that
there is an open set ${\mathcal U} \subset {\mathcal T}$ of regular invariant tori
such that for each $\T{} \in {\mathcal U}$ the $\pi_1$-image of $\T{} \to \Sigma$ has
finite index.
Therefore, there is a splitting $\T{} = \T{{\rm r}} \times \T{{\rm q}}$ where
$\pi_1(\T{}) \to \pi_1(\Sigma)$ factors through an injection
$\pi_1(\T{{\rm q}}) \to \pi_1(\Sigma)$.
\end{proof}

\noindent
{\em Remark}. It is clear from the construction of action-angle variables
that the splitting of $\tilde{\T{}}$ can be made compatible with its
tautological affine structure.

\medskip

Let us continue with the notation and hypotheses of Lemma~\ref{lem:taim}. In
addition,
\begin{lem} \label{lem:min}
Let ${\mathcal S}$ be a singular fibration of $S\Sigma$ whose
regular fibres are ${\rm s}$-dimensional $\varphi$-invariant tori.
If there is a dense set of fibres of $\tilde{\mathcal S}$ on which
$\tilde{\varphi}$ is minimal, then ${\rm s} \leq {\rm r}$.
\end{lem}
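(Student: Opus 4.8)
The plan is to run the given Ta\u{\i}manov fibration $\mathcal T$ against $\mathcal S$ on the universal cover $S\tilde\Sigma$, exploiting that both fibrations are $\tilde{\varphi}$-invariant. Let $\tilde L$ denote the (nonempty, open) full preimage in $S\tilde\Sigma$ of the open union of the fibres in $\mathcal U$; over $\tilde L$ each fibre is a cylinder $\tilde{\T{}}\cong\T{{\rm r}}\times\R{{\rm q}}$ which, by the Remark following Lemma~\ref{lem:taim}, carries an affine structure making the splitting $\T{{\rm r}}\times\R{{\rm q}}$ affine and $\tilde{\varphi}$ a linear flow. Since the minimal fibres of $\tilde{\mathcal S}$ are dense and $\tilde L$ is open and nonempty, I first choose a regular fibre ${\bf S}$ of $\mathcal S$ whose lift $\tilde{\bf S}$ is minimal for $\tilde{\varphi}$ and meets $\tilde L$; fix $P\in\tilde{\bf S}\cap\tilde L$ and let $\tilde{\T{}}$ be the cylinder through $P$. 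Writing $O$ for the $\tilde{\varphi}$-orbit of $P$, invariance gives $O\subseteq\tilde{\bf S}$ and $O\subseteq\tilde{\T{}}$.

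The first step is to show $\tilde{\bf S}$ is compact. As ${\bf S}$ is a regular fibre it carries a translation flow in its action--angle coordinates, so $\tilde{\bf S}\cong\R{{\rm s}}/\Lambda_0\cong\T{\kappa}\times\R{{\rm s}-\kappa}$, where $\Lambda_0=\ker(\pi_1({\bf S})\to\pi_1(S\Sigma))$ has rank $\kappa$, and $\tilde{\varphi}|\tilde{\bf S}$ is again a translation. A translation flow on $\T{\kappa}\times\R{{\rm s}-\kappa}$ cannot be minimal when ${\rm s}-\kappa>0$: projecting to the $\R{{\rm s}-\kappa}$ factor, each orbit either escapes to infinity or stays in a single compact slice, so none is dense. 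Hence $\kappa={\rm s}$ and $\tilde{\bf S}\cong\T{{\rm s}}$ is compact; in particular $O$ is relatively compact in $S\tilde\Sigma$, and by minimality its closure $\overline O$ equals $\tilde{\bf S}$.

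The second step transfers this to $\tilde{\T{}}$. Since $\tilde{\T{}}\cong\T{{\rm r}}\times\R{{\rm q}}$ is properly embedded in $S\tilde\Sigma$, the relative compactness of $O\subseteq\tilde{\T{}}$ forces the $\R{{\rm q}}$-component of the constant flow direction to vanish; thus $O$ lies in a single slice $\T{{\rm r}}\times\{{\rm pt}\}$, and the closure of $O$ taken within $\tilde{\T{}}$ is a subtorus $\T{{\rm s}'}$ with ${\rm s}'\le{\rm r}$. This subtorus is compact, hence closed in $S\tilde\Sigma$, and contains $O$, so $\overline O\subseteq\T{{\rm s}'}$. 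Combining with $\overline O=\tilde{\bf S}$ yields an inclusion of the compact ${\rm s}$-manifold $\tilde{\bf S}$ into the ${\rm s}'$-manifold $\T{{\rm s}'}$; by invariance of domain ${\rm s}\le{\rm s}'\le{\rm r}$, which is the assertion.

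I expect the main obstacle to be the bookkeeping of closures across the two fibrations: the orbit closure must be computed inside the non-compact fibre $\tilde{\T{}}$, yet one needs it to coincide with the ambient closure in $S\tilde\Sigma$. Compactness of $\tilde{\bf S}$, forced by minimality in the first step, is exactly what reconciles the two --- it simultaneously annihilates the $\R{{\rm q}}$-direction of the linear flow on $\tilde{\T{}}$ and makes the resulting slice-subtorus $\T{{\rm s}'}$ closed in $S\tilde\Sigma$, so that the two closures agree and the dimension comparison goes through.
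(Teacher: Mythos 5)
Your proof is correct and takes essentially the same route as the paper's: fix a point lying on both a minimal regular fibre $\tilde{\bf S}$ of $\tilde{\mathcal S}$ and a cylinder $\tilde{\T{}} \cong \T{{\rm r}}\times\R{{\rm q}}$ of $\tilde{\mathcal T}$, show the orbit closure is trapped in a compact slice $\T{{\rm r}}\times\{v''\}$, identify that closure with $\tilde{\bf S}$ by minimality, and compare dimensions. The one (welcome) difference is local: where the paper kills the $\R{{\rm q}}$-drift by asserting that minimality makes the point recurrent, you first prove $\tilde{\bf S}$ is compact --- a lifted translation flow on a noncompact cylinder cannot be minimal --- and then use relative compactness of the orbit, which makes explicit a step the paper leaves implicit.
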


\begin{proof} Recall that a flow is minimal if every orbit is dense.
Let $v \in S\tilde{\Sigma}$ be a regular
point of each singular fibration and let
$\tilde{\bf S} \in \tilde{\mathcal S}$ (resp.
$\tilde{\T{}} \in \tilde{\mathcal T}$)
be the regular $\tilde{\mathcal S}$-fibre (resp. $\tilde{\mathcal T}$-fibre)
through $v$. It can be assumed that $v$ is chosen so that
$\tilde{\varphi}|\tilde{\bf S}$ is minimal and
$\tilde{\T{}} \in \tilde{\mathcal U}$.
Since $\tilde{\varphi}|\tilde{\bf S}$ is minimal, $v$ is a recurrent point
for $\tilde{\varphi}|\tilde{\T{}}$. Therefore, all points of $\tilde{\T{}}$ are
recurrent. Let $\tilde{\T{}} = \T{{\rm r}} \times \R{{\rm q}}$ be a splitting from
Lemma~\ref{lem:taim} which is compatible with the tautological affine structure
on $\tilde{\T{}}$ and let $v=(v',v'')$ relative to this splitting. Since $v$
is a recurrent point for $\tilde{\varphi}|\tilde{\T{}}$, the orbit closure
 $\overline{\tilde{\varphi}_{\R{}}(v)}$ must be contained in $\T{{\rm r}} \times \{v''\}$.
Since $\tilde{\varphi}|\tilde{\bf S}$ is minimal, $\overline{\tilde{\varphi}_{\R{}}(v)} =
\tilde{\bf S}$. Therefore $\tilde{\bf S} \subset \T{{\rm r}} \times \{v''\}$. Thus
${\rm s} \leq {\rm r}$.
\end{proof}

\noindent
{\em Proof of Part 1 of Theorem~\ref{thm:a}}.
Let us return to the $(\Sigma,{\bf g})$ constructed in sections 3.1--3.4.
In this case ${\rm q}=2$ and ${\mathcal S}$ is the singular
fibration of $S\Sigma$ by the connected components of the common level sets
of $H_1,H_2,f_1,f_2$ and $f_3$, so ${\rm s}=3$.
The inequalities (*) and the subsequent argument shows that every
regular fibre of $\tilde{\mathcal S}$
is a $3$-torus. The geodesic flow $\varphi$ is minimal on a
dense set of regular fibres of ${\mathcal S}$, hence $\tilde{\mathcal S}$.
This follows from equation (9) of~\cite{Butler:2000a}, which constructs
action-angle variables for the geodesic flow of $(E_1,{\bf g}_1)$. By
Lemma~\ref{lem:taim}, if $\varphi$ is
real-analytically integrable, then ${\rm r} \geq {\rm s}=3$.
Since ${\rm r}+{\rm q} \leq 4$, ${\rm q} \leq 1$. As ${\rm q}=2$, this is
absurd. Therefore, $\varphi$ is not real-analytically integrable in either
the commutative or non-commutative sense.
This proves part 1 of Theorem~\ref{thm:a}.

\medskip

\noindent
{\em Remark}. The fact that $\varphi$ is non-commutatively integrable appears
to be important for the above proof of real-analytic non-integrability. This is
mistaken.
The hamiltonian $H'_1 = (2+\sin 2\pi y) H_1$ on $T^*E_1$ plus $H_2$ on
$T^*\Sphere{2}$ induces a real-analytic metric ${\bf g}'$ on $\Sigma$ whose
geodesic flow $\varphi'$
is completely integrable with integrals $H'_1, H_2, f_1$ and $f_2$. Construction
of action-angle variables for the induced singular toral fibration ${\mathcal S}$
shows that $\varphi'$ is minimal on a generic
fibre of $\mathcal S$. The inequality (*) for $\tilde{y}$ shows that the
regular fibres of $\tilde{\mathcal S}$ can only go to infinity in the
$\tilde{x}$-direction, so they are either compact $\T{4}$ or cylinders
$\T{3} \times \R{1}$.

On the other hand, assume that $\varphi'$ is real-analytically integrable with induced
singular fibration ${\mathcal T}$. Let $v \in S\Sigma$ be a regular point of both
singular fibrations. Let ${\bf S} \in {\mathcal S}$ and $\T{} \in {\mathcal T}$ be
the respective fibres through $v$. By the density of minimal fibres, it can be
assumed that $\varphi'|{\bf S}$ is minimal and that $\T{} \in {\mathcal U}$.
Since $\T{}$ is a $\varphi'$-invariant torus, minimality implies
${\bf S} \subset \T{}$. Since $\T{}$
is isotropic, it is a lagrangian torus of the same dimension of ${\bf S}$.
Therefore, ${\bf S}$ is open and closed in
$\T{}$, hence ${\bf S} = \T{}$. By Lemma~\ref{lem:taim}, $\tilde{\T{}}$ splits
as $\T{2} \times \R{2}$. By the above comments $\tilde{\bf S} = \T{3} \times \R{1}$
or $\T{4}$.
But $\tilde{\T{}}$ is homeomorphic to $\tilde{\bf S}$. Absurd.

\section{Semisimplicity}

Let $\Gamma'_1 = \{ p \in T^* \Sigma\ : \ c (2 H_1 - c^2)(2 H_2 - c^2) = 0
\}$, $L'_1 = T^* \Sigma - \Gamma'_1$ and $B'_1 = \T{2} \times \R{*}
\times \R{+} \times \R{+}$. Define the map $f'_1 : L'_1 \to B'_1$ by
$$f'_1 = (\frac{a}{c}+y\ \bmod 1, \frac{b}{c}-x\ \bmod 1, c, 2H_2 -
c^2, 2H_1-c^2).$$ This map is a proper submersion whose fibres are
isotropic, $\varphi$-invariant $3$-tori. The singular set $\Gamma'_1$
is a real-analytic set, hence a tamely-embedded polyhedron.

Similarly, let $\Gamma_1 = \Gamma'_1$, $L_1 = L'_1$ and $B_1 = \T{1}
\times  \R{*} \times \R{+} \times \R{+}$. Define the map $f_1 : L_1 \to B_1$ by
$$f_1 = (\frac{a}{c}+y\ \bmod 1, c, 2H_2 - c^2, 2H_1-c^2).$$ This map
is a proper submersion whose fibres are isotropic, $\varphi$-invariant
$4$-tori. These two constructions, along with the arguments
in~\cite{Butler:2005a}, imply all but the final sentence of
Theorem~\ref{thm:b}.

The map $f'_1$ extends to a map $f'_2 : L'_2 = \{ c\neq 0 \}
\to B'_2 = \T{2} \times \R{*} \times \R{\geq 0} \times \R{\geq 0}$. By
the homotopy-lifting theorem, the inclusion of a fibre of $f'_1$ is
homotopic to the inclusion of the fibre $\T{}_{s,t}$ over $(0 \bmod
\Z{2},1,s,t)$ ($s,t > 0$). As $\T{}_{0,0}$ is an elliptic critical
fibre for $f'_2$, and an $\SO{2}$-orbit, it follows that
$\T{}_{s,t}$ is contractible in $L'_2$. This suffices to complete the
proof of Theorem~\ref{thm:b}.

\bibliographystyle{plain}
%\bibliography{bibliography}

\def\polhk#1{\setbox0=\hbox{#1}{\ooalign{\hidewidth
  \lower1.5ex\hbox{`}\hidewidth\crcr\unhbox0}}}

\end{document}